\begin{document}

\theoremstyle{plain}
\newtheorem{theorem}{Theorem}[section]
\newtheorem{definition}[theorem]{Definition}
\newtheorem{lemma}[theorem]{Lemma}
\newtheorem{prop}[theorem]{Proposition}
\newtheorem{corollary}[theorem]{Corollary}
\newtheorem{conjecture}[theorem]{Conjecture}
\theoremstyle{remark}
\newtheorem{remark}[theorem]{Remark}
\newtheorem{example}[theorem]{Example}

\newcommand{\reg}{\mathrm{reg}}
\newcommand{\Ass}{\mathrm{Ass}}
\newcommand{\charakt}{\mathrm{char}}
\newcommand{\diag}{\mathrm{diag}}
\newcommand{\GL}{\mathrm{GL}}
\newcommand{\Tor}{\mathrm{Tor}}
\newcommand{\im}{\mathrm{im}}
\newcommand{\coker}{\mathrm{coker}}
\newcommand{\id}{\mathrm{id}}
\newcommand{\length}{\mathrm{length}}
\newcommand{\LM}{\mathrm{LM}}
\newcommand{\LT}{\mathrm{LT}}
\newcommand{\cone}{\mathrm{cone}}
\newcommand{\ord}{\mathrm{ord}}
\newcommand{\Quot}{\mathrm{Quot}}
\newcommand{\Spec}{\mathrm{Spec}}
\newcommand{\height}{\mathrm{ht}}
\newcommand{\rank}{\mathrm{rank}}
\newcommand{\Ann}{\mathrm{Ann}}
\newcommand{\reynolds}{\mathcal{R}}
\newcommand{\maxId}{\mathfrak{m}}
\newcommand{\maxIdn}{\mathfrak{n}}
\newcommand{\primId}{\mathfrak{p}}

\title{Arithmetic invariant rings of some irreducible complex reflection groups}
\author{David Mundelius \\ \small{Technische Universität München, Zentrum Mathematik - M11} \\ \small{Boltzmannstraße 3, 85748 Garching, Germany} \\ \small{\texttt{david.mundelius@tum.de}}}
\date{March 15, 2023}
\maketitle

\begin{abstract}
In this article it is determined which integral reflection representations of the symmetric groups and the primitive complex reflection groups of degree $2$ have rings of invariants which are isomorphic to polynomial rings.
\end{abstract}

\noindent \textbf{Keywords:} reflection group, integral representation, invariant ring

\section*{Introduction}

By a famous result of Shephard and Todd \cite{st} the ring of invariants of every finite complex reflection group is isomorphic to a polynomial ring. This has originally been proved by classifying all irreducible finite complex reflection groups and determining the ring of invariants for each of these groups explicitly. Every such group is defined over some algebraic number field $K$ and can in fact be defined over the ring of integers $R$ of $K$; however, sometimes there are several pairwise inequivalent representations of these groups over $R$ which are all equivalent to the reflection representation over $K$, see \cite{craig, feit1, feit2}.

The purpose of this article is to determine for some of the irreducible complex reflection groups whether or not their invariant rings over the ring of integers $R$ is isomorphic to a polynomial ring over $R$ and if not, which elements of $R$ need to be invertible in order to obtain a ring of invariants which is isomorphic to a polynomial ring. We start with some general results  in \cref{SectionGeneral} which will be needed for the computations in the later sections; see \cite{ainvpseudo} for more general results regarding the question when a ring of invariants of a finite group over a Dedekind domain is a polynomial ring.

In \cref{SectionSymmetric} we study one infinite family of irreducible complex reflection groups which appears in the classification by Shephard and Todd, namely the symmetric groups. These are defined as reflection groups over the integers and have several pairwise inequivalent reflection representations over $\mathbb{Z}$ which have been classified by Craig \cite{craig}. For each of these integral representations we determine over which rings $R$ with $\mathbb{Z} \subseteq R \subseteq \mathbb{Q}$ the ring of invariants is a polynomial ring.

In \cref{SectionDegree2} we discuss another class of irreducible complex reflection groups, namely the primitive reflection groups of degree $2$, i.e. those reflection groups in $Gl_2(\mathbb{C})$ which cannot be given by monomial matrices. There are 19 such groups which appear with numbers 4 to 22 in the classification by Shephard and Todd. Their integral reflection representations have been classified by Feit \cite{feit2}. For each of them we determine the invariant ring over $\mathbb{C}$ and then decide whether the ring of invariants over the respective ring of integers is a polynomial ring.

The explicit computations have been done with the computer algebra system \texttt{Singular} \cite{singular} and its library \texttt{finvar} \cite{finvar}. Many of the results of this article also appeared in the author's unpublished Master's thesis \cite{masterthesis}.

If $L$ is a finitely generated free $R$-module of rank $n$ for some commutative ring $R$, then we can view a polynomial in $R[x_1, \ldots, x_n]$ as a function on $L$ and write $R[L]$ instead of $R[x_1, \ldots, x_n]$. If $S$ is a commutative $R$-algebra, then we often write $S[L]$ instead of $S[L \otimes_R S]$.

\section*{Acknowledgements}

I want to thank my advisor Gregor Kemper for proposing this topic and for his continuous support. Further I acknowledge the support from the graduate program TopMath of the Elite Network of Bavaria and the TopMath Graduate Center of TUM Graduate School at Technische Universität München.

\section{General results} \label{SectionGeneral}

\begin{prop} \label{AlgIndLM}
Let $R$ be an integral domain with quotient field $K$ and let $f_1, \ldots, f_n \in R[x_1, \ldots, x_n]$ be homogeneous. Assume that there is a monomial ordering on $K[x_1, \ldots, x_n]$ such that the leading coefficients of $f_1, \ldots, f_n$ are all units in $R$ and the leading monomials of $f_1, \ldots, f_n$ are algebraically independent. Then we have $R[f_1, \ldots, f_n]=K[f_1, \ldots, f_n] \cap R[x_1, \ldots, x_n]$.
\end{prop}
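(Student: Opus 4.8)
The inclusion $R[f_1,\ldots,f_n] \subseteq K[f_1,\ldots,f_n] \cap R[x_1,\ldots,x_n]$ is immediate, so the plan is to establish the reverse inclusion. The engine of the argument is the behaviour of leading terms under products: for an exponent vector $\alpha \in \mathbb{N}^n$ write $f^\alpha := f_1^{\alpha_1}\cdots f_n^{\alpha_n}$; then, since the monomial ordering is multiplicative and $K$ is a field, $\LM(f^\alpha) = \LM(f_1)^{\alpha_1}\cdots \LM(f_n)^{\alpha_n}$ and the leading coefficient $\mathrm{lc}(f^\alpha)$ is the corresponding product of the $\mathrm{lc}(f_i)^{\alpha_i}$, which by hypothesis is a unit in $R$. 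Because the $\LM(f_i)$ are algebraically independent, the assignment $\alpha \mapsto \LM(f^\alpha)$ is injective, so distinct exponent vectors produce distinct leading monomials. This no-collision property is what makes everything work.

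First I would record that the $f_i$ are algebraically independent over $K$: in any nontrivial $K$-linear combination $\sum_\alpha c_\alpha f^\alpha$, the index whose leading monomial $\LM(f^\alpha)$ is largest contributes a term that cannot be cancelled by any other (the leading monomials being pairwise distinct), so the sum is nonzero. Consequently every $g \in K[f_1,\ldots,f_n]$ has a \emph{unique} representation $g = \sum_\alpha c_\alpha f^\alpha$ with $c_\alpha \in K$, and the claim reduces to showing that if such a $g$ lies in $R[x_1,\ldots,x_n]$, then all $c_\alpha \in R$.

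Now take $g = \sum_\alpha c_\alpha f^\alpha \in R[x_1,\ldots,x_n]$ with not all $c_\alpha$ zero, and let $\alpha^*$ be the unique index maximizing $\LM(f^\alpha)$ among those with $c_\alpha \neq 0$. By the no-collision observation the leading term of $g$ is exactly $c_{\alpha^*}\,\mathrm{lc}(f^{\alpha^*})\,\LM(f^{\alpha^*})$. Since $g \in R[x_1,\ldots,x_n]$, its leading coefficient lies in $R$; and since $\mathrm{lc}(f^{\alpha^*})$ is a unit in $R$, we conclude $c_{\alpha^*} \in R$. I would then pass to $g - c_{\alpha^*} f^{\alpha^*}$, which again lies in $R[x_1,\ldots,x_n]$ and whose unique representation has strictly fewer nonzero coefficients, and finish by induction on the number of nonzero $c_\alpha$ (equivalently, by descending induction on the leading monomial).

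The one point requiring care, and the main obstacle, is the no-collision claim, which is precisely where algebraic independence of the leading monomials enters: without it, two distinct exponent vectors could share a leading monomial, their leading terms could cancel, and $\mathrm{lc}(g)$ would no longer isolate a single coefficient $c_{\alpha^*}$. The unit hypothesis on the $\mathrm{lc}(f_i)$ is the second essential ingredient: it converts ``$\mathrm{lc}(g) \in R$'' into ``$c_{\alpha^*} \in R$'', whereas over a general domain one could only deduce $c_{\alpha^*}\,\mathrm{lc}(f^{\alpha^*}) \in R$, which does not suffice. Everything else is routine bookkeeping with leading terms in $K[x_1,\ldots,x_n]$, where no leading coefficients vanish.
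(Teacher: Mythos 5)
Your proposal is correct and follows essentially the same route as the paper: both arguments rest on the observation that algebraic independence of the $\LM(f_i)$ makes $\alpha \mapsto \LM(f^\alpha)$ injective, so the leading term of any $K$-combination isolates a single coefficient $c_{\alpha^*}$, which the unit hypothesis then forces into $R$, after which one subtracts $c_{\alpha^*}f^{\alpha^*}$ and iterates. The paper phrases the iteration as a minimal-counterexample argument on the leading monomial rather than induction on the number of nonzero coefficients, but this is only a cosmetic difference.
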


\begin{proof}
We denote the monomial ordering by $\leq$ and the leading monomial of a polynomial $f$ with respect to $\leq$ by $\LM(f)$. Without loss of generality we may assume that the leading coefficients of all $f_i$ are $1$. Assume that there is a $p \in K[f_1, \ldots, f_n] \cap R[x_1, \ldots, x_n]$ which is not in $R[f_1, \ldots, f_n]$ and choose such a $p$ with minimal leading monomial. Then we can write $p=\sum_{i=1}^m c_i f_1^{e_{i,1}} \cdots f_n^{e_{i,n}}$ with $c_i \in K$ and $e_{i,j} \in \mathbb{N}$. We write $g_i=f_1^{e_{i,1}} \cdots f_n^{e_{i,n}}$ and we may assume that the $g_i$ are pairwise distinct. Since the leading monomials of $f_1, \ldots, f_n$ are algebraically independent, the $\LM(g_i)=\LM(f_1)^{e_{i,1}} \cdots \LM(f_n)^{e_{i,n}}$ are pairwise distinct and we may assume that $\LM(g_1)>\LM(g_2)> \ldots > \LM(g_n)$. Then $LM(p)=\LM(g_1)$ and the leading coefficient of $p$ is $c_1$. Since $p \in R[x_1, \ldots, x_n]$ this meams that $c_1 \in R$, so $c_1 g_1 \in R[f_1, \ldots, f_n]$ and for $p' \coloneqq p-c_1 g_1$ we have $\LM(p')<\LM(p)$ and $p' \in K[f_1, \ldots, f_n] \cap R[x_1, \ldots, x_n]$. By the choice of $p$, this implies that $p' \in R[f_1, \ldots, f_n]$ and therefore also $p \in R[f_1, \ldots, f_n]$.
\end{proof}

\begin{prop} \label{AlgIndResidue}
Let $R$ be a Dedekind domain with $K=\Quot(R)$. Let $L \coloneqq R^n$ and let $G \subseteq Gl(L)$ be a finite subgroup such that $K[L]^G$ is a polynomial ring generated by homogeneous elements $f_1, \ldots, f_n$. Then the following statements are equivalent:
\begin{compactenum}[(i)]
\item $R[f_1, \ldots, f_n] = R[L]^G$.
\item For every maximal ideal $\maxId \subset R$ we have $R_\maxId[f_1, \ldots, f_n] = R_\maxId[L]^G$.
\item For every maximal ideal $\maxId \subset R$ the classes of $f_1, \ldots, f_n$ are algebraically independent over the field $R/\maxId$.
\end{compactenum}
\end{prop}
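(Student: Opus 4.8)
The plan is to run the cycle $(i) \Rightarrow (ii) \Rightarrow (iii) \Rightarrow (i)$ by first reducing the global statement to a local one via a standard local--global principle, and then treating the local situation at a fixed maximal ideal by reduction modulo that ideal. Throughout I use that the homogeneous generators may be taken in $R[L]^G = R[L] \cap K[L]^G$ (so that the algebra $R[f_1,\ldots,f_n]$ and the reductions in (iii) make sense), whence the inclusion $R[f_1,\ldots,f_n] \subseteq R[L]^G$ of $R$-submodules of $K[L]$ always holds, and (i) asserts that it is an equality.

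For $(i) \Leftrightarrow (ii)$ I would invoke the fact that an inclusion of $R$-submodules of the $K$-vector space $K[L]$ is an equality if and only if it becomes an equality after localizing at each maximal ideal $\maxId \subset R$. The point is that localization commutes with the two operations involved: for the multiplicative set $S = R \setminus \maxId$, whose elements are $G$-invariant scalars, one has $(S^{-1}R[L])^G = S^{-1}\bigl(R[L]^G\bigr) = R_\maxId[L]^G$, while localizing the finitely generated $R$-algebra gives $S^{-1}R[f_1,\ldots,f_n] = R_\maxId[f_1,\ldots,f_n]$. Hence the localized equality at $\maxId$ is exactly the statement in (ii), and the equivalence follows formally.

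The heart of the argument is $(ii) \Leftrightarrow (iii)$, which I would establish one maximal ideal at a time. Fix $\maxId$ and write $\mathcal{O} = R_\maxId$, a DVR with uniformizer $\pi$, valuation $v$ and residue field $k = \mathcal{O}/\maxId\mathcal{O} \cong R/\maxId$. Since $f_1,\ldots,f_n$ are algebraically independent over $K$, the substitution $Q \mapsto Q(f_1,\ldots,f_n)$ is an isomorphism $K[y_1,\ldots,y_n] \xrightarrow{\sim} K[L]^G$ carrying $\mathcal{O}[y]$ onto $\mathcal{O}[f]$; under this identification $\mathcal{O}[L]^G = \mathcal{O}[L] \cap K[L]^G$ corresponds to $\{Q \in K[y] : Q(f) \in \mathcal{O}[L]\}$, so the local form of (ii) says precisely that $Q(f) \in \mathcal{O}[L]$ forces $Q \in \mathcal{O}[y]$. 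To see this is equivalent to algebraic independence of the reductions $\bar f_1,\ldots,\bar f_n$ over $k$, I argue in both directions by reduction mod $\pi$. If the $\bar f_i$ satisfy a nontrivial relation $\sum_\alpha c_\alpha \bar f^\alpha = 0$ in $k[L]$, I lift the coefficients to $\mathcal{O}$, with at least one lift $\tilde c_\alpha$ a unit, obtaining $P = \sum_\alpha \tilde c_\alpha f^\alpha$ all of whose coefficients lie in $\maxId\mathcal{O} = \pi\mathcal{O}$; then $h := \pi^{-1}P$ lies in $\mathcal{O}[L]^G$, while the corresponding $Q = \pi^{-1}\sum_\alpha \tilde c_\alpha y^\alpha$ has a coefficient of valuation $-1$, so $h \notin \mathcal{O}[f]$, violating (ii). Conversely, if some $Q \in K[y]$ has $Q(f) \in \mathcal{O}[L]$ but $Q \notin \mathcal{O}[y]$, set $d = -\min_\alpha v(\text{coeff}_\alpha Q) > 0$; then $\pi^d Q \in \mathcal{O}[y]$ has a unit coefficient, so $\overline{\pi^d Q} \neq 0$ in $k[y]$, yet $(\pi^d Q)(f) = \pi^d\,Q(f) \in \pi\mathcal{O}[L]$ reduces to $(\overline{\pi^d Q})(\bar f) = 0$, a nontrivial algebraic relation among the $\bar f_i$, contradicting (iii).

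Combining the two reductions closes the cycle. The only genuinely delicate step, and the one I would write out most carefully, is the $\pi$-divisibility bookkeeping in this last paragraph: verifying that the lifted relation really has all of its polynomial coefficients in $\maxId\mathcal{O}$, and that scaling by $\pi^d$ turns a minimal-valuation coefficient into a unit whose reduction to $k$ is nonzero. Everything else is formal, and the argument is essentially a refinement, in the DVR setting, of the leading-term analysis already used in \cref{AlgIndLM}.
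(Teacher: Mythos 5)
Your proof is correct, and it follows the same overall decomposition as the paper (a formal local--global reduction of $(i)$ to $(ii)$, then a pointwise statement at each maximal ideal), but it differs in where the actual work is done. For $(i)\Leftrightarrow(ii)$ the paper argues via the intersection formulas $R[L]=\bigcap_\maxId R_\maxId[L]$, $R[L]^G=\bigcap_\maxId R_\maxId[L]^G$ and $R[f_1,\ldots,f_n]=\bigcap_\maxId R_\maxId[f_1,\ldots,f_n]$, which is the same local--global principle you invoke for submodules of $K[L]$; your observation that localizing at $S=R\setminus\maxId$ commutes with taking $G$-invariants (since $S$ consists of invariant scalars) is the only point that needs saying, and you say it. The substantive equivalence $(ii)\Leftrightarrow(iii)$ is where you genuinely diverge: the paper simply cites Lemma~3.3 of \cite{ainvpseudo}, whereas you prove it directly by a valuation argument over the DVR $R_\maxId$ --- a nontrivial relation among the reductions $\bar f_1,\ldots,\bar f_n$ lifts to an element of $\pi^{-1}R_\maxId[f_1,\ldots,f_n]\cap R_\maxId[L]^G$ that is nonzero (by algebraic independence of the $f_i$ over $K$) and not in $R_\maxId[f_1,\ldots,f_n]$ (by uniqueness of the $K$-coefficients), and conversely an element of $R_\maxId[L]^G\setminus R_\maxId[f_1,\ldots,f_n]$ yields, after multiplying by the appropriate power of $\pi$, a nonzero polynomial over $R/\maxId$ annihilating the $\bar f_i$. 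Both directions check out, including the $\pi$-divisibility bookkeeping you flag. What your route buys is self-containedness and transparency (the argument is visibly the DVR analogue of the leading-term induction in \cref{AlgIndLM}); what the paper's citation buys is brevity. Neither approach is wrong, and there is no gap in yours.
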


\begin{proof}
It is clear that $(i)$ implies $(ii)$. For the implication $(ii) \implies (i)$ first note that we have $R[L]=\bigcap_\maxId R_\maxId[L]$ where the intersection ranges over all maximal ideals $\maxId \subset R$. This implies that $R[L]^G=\bigcap_\maxId R_\maxId[L]^G$ and we also have $R[f_1, \ldots, f_n]=\bigcap_\maxId R_\maxId[f_1, \ldots, f_n]$, so the claim follows.

The equivalence of $(ii)$ and $(iii)$ is proved in \cite[Lemma 3.3]{ainvpseudo}.
\end{proof}

The following result of Kemper \cite[Proposition 16]{kemperrefl} will be used several times within this article:

\begin{theorem} \label{KemperAlgInd}
Let $G$ be a finite group and let $V$ be a representation of $G$ over a field $K$. If the representation is faithful, then for elements $f_1, \ldots, f_n \in K[V]^G$ the following two properties are equivalent:
\begin{compactenum}[(i)]
\item The polynomials $f_1, \ldots, f_n$ are algebraically independent and we have $\deg f_1 \cdots \deg f_n=|G|$.
\item $K[V]^G=K[f_1, \ldots, f_n]$.
\end{compactenum}
\end{theorem}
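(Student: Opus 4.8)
The plan is to compare degrees in the tower of fields
\[
K(f_1,\dots,f_n) \subseteq K(V)^G \subseteq K(V),
\]
where I take the $f_i$ to be homogeneous, as the degree condition in (i) presupposes. Everything rests on two degree computations. First, faithfulness of $G$ on $V$ makes the induced action of $G$ on the field $K(V)$ faithful, so by Artin's theorem $K(V)/K(V)^G$ is Galois with group $G$ and $[K(V):K(V)^G]=|G|$. Second, I claim that algebraic independence of the homogeneous polynomials $f_1,\dots,f_n$ already forces $[K(V):K(f_1,\dots,f_n)]=\deg f_1\cdots\deg f_n$.

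To prove this second formula, I would use that $n$ homogeneous polynomials in $n$ variables are algebraically independent exactly when they form a homogeneous system of parameters; hence $K[V]$ is a finite module over $A:=K[f_1,\dots,f_n]$, and since $K[V]$ is Cohen--Macaulay and the $f_i$ form a regular sequence, it is in fact a free $A$-module. Factoring Hilbert series as $H_{K[V]}(t)=H_A(t)\,H_{K[V]/(f_1,\dots,f_n)}(t)$ yields $H_{K[V]/(f_1,\dots,f_n)}(t)=\prod_i\bigl(1+t+\dots+t^{\deg f_i-1}\bigr)$, so the free rank of $K[V]$ over $A$, obtained by evaluating at $t=1$, equals $\prod_i\deg f_i$. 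This rank is precisely $[K(V):K(f_1,\dots,f_n)]$, proving the claim.

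Given these inputs, the implication (ii) $\Rightarrow$ (i) is immediate: if $K[V]^G=K[f_1,\dots,f_n]$ then the $f_i$ are algebraically independent by definition of a polynomial ring, and passing to fraction fields gives $K(V)^G=K(f_1,\dots,f_n)$, so $\prod_i\deg f_i=[K(V):K(f_1,\dots,f_n)]=[K(V):K(V)^G]=|G|$.

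For (i) $\Rightarrow$ (ii), algebraic independence together with $\prod_i\deg f_i=|G|$ gives $[K(V):K(f_1,\dots,f_n)]=|G|=[K(V):K(V)^G]$; since $K(f_1,\dots,f_n)\subseteq K(V)^G$, multiplicativity of field degrees forces the equality of fraction fields $K(f_1,\dots,f_n)=K(V)^G$. The remaining step, which I expect to be the most delicate, is to upgrade this to the equality of rings $K[f_1,\dots,f_n]=K[V]^G$. I would do this by a normality argument: every element of $K[V]^G$ lies in $K[V]$ and is therefore integral over $A$ (as $K[V]$ is module-finite over $A$), while also lying in $K(V)^G=\Quot(A)$; since the polynomial ring $A$ is integrally closed, such an element must belong to $A$. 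This gives $K[V]^G\subseteq A$, and the reverse inclusion is clear.
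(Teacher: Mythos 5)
The paper offers no proof of this statement at all --- it simply cites Kemper --- so your argument has to stand on its own, and the overall framework (comparing degrees in the tower $K(f_1,\dots,f_n)\subseteq K(V)^G\subseteq K(V)$, with Artin's theorem giving $[K(V):K(V)^G]=|G|$) is indeed the right one. However, the pivotal lemma you invoke is false: $n$ algebraically independent homogeneous polynomials in $n$ variables need \emph{not} form a homogeneous system of parameters. Take $f_1=x$, $f_2=xy$ in $K[x,y]$: these are homogeneous and algebraically independent, yet $K[x,y]$ is not a finite module over $K[x,xy]$, and $[K(x,y):K(x,xy)]=1\neq 2=\deg f_1\cdot\deg f_2$ because $y=xy/x$ already lies in $K(x,xy)$. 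So your asserted identity $[K(V):K(f_1,\dots,f_n)]=\prod_i\deg f_i$ does not follow from algebraic independence alone; only the one-sided Bezout/Perron bound $[K(V):K(f_1,\dots,f_n)]\le\prod_i\deg f_i$ holds in that generality. The direction (ii)$\Rightarrow$(i) survives, because there the finiteness of $K[V]$ over $A=K[f_1,\dots,f_n]=K[V]^G$ can be justified correctly: every $h\in K[V]$ is a root of $\prod_{\sigma\in G}(T-\sigma h)\in K[V]^G[T]$, so $K[V]$ is integral and hence module-finite over $K[V]^G$, after which your Cohen--Macaulay/Hilbert-series rank computation goes through (you also tacitly use $\Quot(K[V]^G)=K(V)^G$, which holds by multiplying numerator and denominator of an invariant fraction $p/q$ by $\prod_{\sigma\neq 1}\sigma(q)$).

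The genuine gap is in (i)$\Rightarrow$(ii). At that point you know only that the $f_i$ are algebraically independent with $\prod_i\deg f_i=|G|$; you do not yet know they form an hsop, so neither the exact degree formula nor your closing step (``every element of $K[V]^G$ is integral over $A$ because $K[V]$ is module-finite over $A$'') is available --- that module-finiteness is essentially equivalent to what is being proved. Using the correct inequality you do still get $K(f_1,\dots,f_n)=K(V)^G$ from the sandwich, but equality of fraction fields plus normality of $A$ does not force $K[V]^G\subseteq A$ in the absence of integrality: again $K[x,xy]\subsetneq K[x,y]$, even though $K[x,xy]$ is a normal polynomial ring with the same fraction field as $K[x,y]$. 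The missing ingredient is the sharper statement that for \emph{homogeneous} algebraically independent $f_i$ the inequality $[K(V):K(f_1,\dots,f_n)]\le\prod_i\deg f_i$ is strict unless $K[V]$ is integral over $A$; granting that, equality (forced here by $\prod_i\deg f_i=|G|$) makes the $f_i$ an hsop and your normality argument then closes the proof. That strictness statement is true but is the real content of Kemper's proposition and needs an actual argument (a refined Bezout count or a Hilbert-series comparison); it cannot be replaced by the claim that algebraic independence alone yields an hsop.
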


\section{The symmetric groups} \label{SectionSymmetric}

Fix an integer $n \geq 3$. The symmetric group $S_n$ has an irreducible reflection representation over $\mathbb{Z}$ of rank $n-1$ which can be defined as follows: we consider the action of $S_n$ on $M \coloneqq \mathbb{Z}^n$ by permutation matrices and then restrict it to the $S_n$-invariant subspace $L_0 \coloneqq \{ (x_1, \ldots, x_n) \in M | \sum_{i=1}^n x_i =0 \}$. Using this, $L_0$ becomes a faithful $\mathbb{Z}$-representation of $S_n$.

\begin{theorem} \label{SymmL0}
The ring of invariants $\mathbb{Z}[L_0]^{S_n}$ is a polynomial ring.
\end{theorem}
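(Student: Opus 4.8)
The plan is to exhibit explicit homogeneous generators of the rational invariant ring whose leading terms satisfy the hypotheses of \cref{AlgIndLM}, and then to combine that proposition with \cref{KemperAlgInd}. Since $L_0$ is a direct summand of $M=\mathbb{Z}^n$ (the quotient $M/L_0\cong\mathbb{Z}$ being free), the restriction map $\mathbb{Z}[M]=\mathbb{Z}[x_1,\dots,x_n]\to\mathbb{Z}[L_0]$ is surjective with kernel generated by the primitive linear form $x_1+\dots+x_n$. Eliminating $x_n=-(x_1+\dots+x_{n-1})$ identifies $\mathbb{Z}[L_0]$ with the polynomial ring $\mathbb{Z}[x_1,\dots,x_{n-1}]$. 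I would take as candidate invariants the images $\bar e_2,\dots,\bar e_n$ of the elementary symmetric polynomials $e_2,\dots,e_n\in\mathbb{Z}[x_1,\dots,x_n]^{S_n}$ under this restriction; each $\bar e_k$ is $S_n$-invariant and homogeneous of degree $k$.

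The key computation is to determine the leading monomials of the $\bar e_k$ under the lexicographic order with $x_1>\dots>x_{n-1}$. Writing $s\coloneqq x_1+\dots+x_{n-1}$ and $e_j'\coloneqq e_j(x_1,\dots,x_{n-1})$, the recursion $e_k=e_k'+x_n e_{k-1}'$ gives $\bar e_k=e_k'-s\,e_{k-1}'$. Here $\LM(e_k')=x_1x_2\cdots x_k$ while $\LM(s\,e_{k-1}')=x_1\cdot x_1x_2\cdots x_{k-1}=x_1^2x_2\cdots x_{k-1}$, and the latter is lexicographically larger because its $x_1$-exponent is $2$. Hence $\LM(\bar e_k)=x_1^2x_2\cdots x_{k-1}$ with leading coefficient $-1$ for every $k=2,\dots,n$ (for $k=n$ one has $e_n'=0$, so $\bar e_n=-s\,x_1\cdots x_{n-1}$ directly). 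I expect the only place demanding a little care to be the claim that this monomial occurs without cancellation; since $e_{k-1}'$ is squarefree, any factorization $x_j\cdot m$ of $x_1^2x_2\cdots x_{k-1}$ with $m$ squarefree forces $j=1$, so the monomial arises in $s\,e_{k-1}'$ in exactly one way.

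It then remains to verify the two hypotheses of \cref{AlgIndLM}. The leading coefficients are all $-1$, hence units in $\mathbb{Z}$. For algebraic independence of the leading monomials it suffices that their exponent vectors $(2,0,\dots,0),(2,1,0,\dots,0),\dots,(2,1,\dots,1)$ be linearly independent over $\mathbb{Q}$: subtracting consecutive vectors produces the $n-2$ standard basis vectors supported in the coordinates $2,\dots,n-1$, which together with $(2,0,\dots,0)$ span $\mathbb{Q}^{n-1}$. In particular the $\bar e_k$ are themselves algebraically independent, so $\mathbb{Z}[\bar e_2,\dots,\bar e_n]$ is a polynomial ring, and \cref{AlgIndLM} yields $\mathbb{Z}[\bar e_2,\dots,\bar e_n]=\mathbb{Q}[\bar e_2,\dots,\bar e_n]\cap\mathbb{Z}[L_0]$.

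Finally I would close the argument using \cref{KemperAlgInd}: the representation $L_0\otimes\mathbb{Q}$ is faithful, the $\bar e_k$ are algebraically independent, and $\deg\bar e_2\cdots\deg\bar e_n=2\cdot 3\cdots n=n!=|S_n|$, so $\mathbb{Q}[L_0]^{S_n}=\mathbb{Q}[\bar e_2,\dots,\bar e_n]$. Using that an element of $\mathbb{Z}[L_0]$ is $S_n$-invariant precisely when it is invariant in $\mathbb{Q}[L_0]$, I obtain $\mathbb{Z}[L_0]^{S_n}=\mathbb{Q}[L_0]^{S_n}\cap\mathbb{Z}[L_0]=\mathbb{Q}[\bar e_2,\dots,\bar e_n]\cap\mathbb{Z}[L_0]=\mathbb{Z}[\bar e_2,\dots,\bar e_n]$, the desired polynomial ring. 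The main obstacle is the leading-term computation; once the monomials $x_1^2x_2\cdots x_{k-1}$ are pinned down, everything else is formal.
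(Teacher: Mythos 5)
Your proof is correct, but it takes a genuinely different route from the paper's. The paper also takes the restricted elementary symmetric polynomials $g_2,\dots,g_n$ as generators, but it verifies the hypothesis of \cref{AlgIndResidue} prime by prime: for each $p$ it identifies $\mathbb{F}_p[L_0]$ with $\mathbb{F}_p[M]/(\overline{f_1})$ and observes that the classes of $f_2,\dots,f_n$ generate the subalgebra $\mathbb{F}_p[f_1,\dots,f_n]/(f_1)$, which has Krull dimension $n-1$, so they are algebraically independent over $\mathbb{F}_p$; this presupposes the classical fact that $\mathbb{Q}[L_0]^{S_n}=\mathbb{Q}[g_2,\dots,g_n]$. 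You instead produce an explicit lex leading-monomial certificate, $\LM(\bar e_k)=x_1^2x_2\cdots x_{k-1}$ with unit leading coefficient, and your computation (including the no-cancellation argument via squarefreeness of $e_{k-1}'$ and the linear independence of the exponent vectors) checks out; this feeds \cref{AlgIndLM} directly and handles all primes at once, and you also re-derive the rational invariant ring from \cref{KemperAlgInd} rather than quoting it, making the argument more self-contained. What the paper's dimension count buys is brevity and freedom from any leading-term bookkeeping; what your argument buys is an explicit SAGBI-type certificate over $\mathbb{Z}$ and independence from the prime-by-prime reduction of \cref{AlgIndResidue}. Both are complete proofs.
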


\begin{proof}
Let $f_1, \ldots, f_n \in \mathbb{Z}[M]$ be the elementary symmetric polynomials with $\deg(f_i)=i$. Then it is well-known that $\mathbb{Z}[M]^{S_n}=\mathbb{Z}[f_1, \ldots, f_n]$ We claim that $\mathbb{Z}[L_0]$ is generated by the restrictions of $f_2, \ldots, f_n$ ro $L_0$, which we call $g_2, \ldots, g_n$.

By Proposition \ref{AlgIndResidue} it is sufficient to show that for every prime number $p$ the classes $\overline{g_2}, \ldots, \overline{g_n}$ of $g_2, \ldots, g_n$ in $\mathbb{F}_p[L_0]$ are algebraically independent.

We have $\mathbb{F}_p[L_0] \cong \mathbb{F}_p[M]/(\overline{f_1})$, so it is sufficient to show that the classes of $f_2, \ldots, f_n$ in $\mathbb{F}_p[M]/(\overline{f_1})$ are algebraically independent. But these classes generate the $\mathbb{F}_p$-subalgebra $\mathbb{F}_p[f_1, \ldots, f_n]/(f_1)$ which has Krull dimension $n-1$ and therefore they must be algebraically independent.
\end{proof}

However, there are several inequivalent representation of $S_n$ over $\mathbb{Z}$ which are equivalent to $L_0$ over $\mathbb{Q}$; these have been classified by Craig \cite[Appendix 2]{craig}. In order to give them, we need some notation: let $E^{i,j}$ be the $(n-1) \times (n-1)$-matrix for which the $(i,j)$-th entry is $1$ and all other entries are $0$. By $E^{0,j}$ or $E^{n,j}$ we always mean the zero matrix. Now for all $1 \leq k \leq n-1$ we define $F_k \coloneqq I_{n-1}+E^{n-k-1,n-k}-2E^{n-k,n-k}+E^{n-k+1,n-k}$ where $I_{n-1}$ denotes the identity matrix. Moreover for every $d \in \mathbb{N}$ which divides $n$, we set $V_d \coloneqq d E^{1,1} + \sum_{i=2}^{n-1} (E^{i,i} + (n-i) E^{1,i}) \in \GL_{n-1}(\mathbb{Q})$. Now we can formulate Craig's theorem:

\begin{theorem} \textup{(Craig \cite{craig})}
For each divisor $d$ of $n+1$ one can define a matrix representation of $S_{n+1}$ over $\mathbb{Z}$ of rank $n-1$ by mapping the transposition $(k,k+1)$ for $1 \leq k \leq n-1$ to $V_d^{-1} F_k V_d$. These representations are pairwise inequivalent over $\mathbb{Z}$, but over $\mathbb{Q}$ they are all equivalent to the representation $L_0$ defined above. Moreover, any $\mathbb{Z}$-representation of $S_n$ which is equivalent to $L_0$ over $\mathbb{Q}$ is equivalent to one of these representations over $\mathbb{Z}$.
\end{theorem}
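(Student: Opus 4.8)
The plan is to prove the three assertions separately: (a) each assignment $s_k := (k,k+1) \mapsto V_d^{-1}F_kV_d$ really defines an integral representation that is $\mathbb{Q}$-equivalent to $L_0$; (b) these representations are pairwise inequivalent over $\mathbb{Z}$; and (c) every $\mathbb{Z}$-form of $L_0$ occurs among them. Throughout I read the statement as referring to $S_n$ and to the divisors $d$ of $n$, consistently with the definitions of the matrices $F_k$ and $V_d$ above (the reflection representation has rank $n-1$ and the $F_k$ are $(n-1)\times(n-1)$).

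For (a) I would first identify the $F_k$ with the Coxeter generators acting on $L_0$. A short computation shows that $F_k-I_{n-1}$ has a single nonzero column, so it has rank $1$, while $F_k^2=I_{n-1}$ and $F_k\neq I_{n-1}$; hence each $F_k$ is an integral reflection with $\det F_k=-1$. Comparing $F_k$ with the matrix of $s_k$ in the basis $e_i-e_{i+1}$ of $L_0$ exhibits $s_k\mapsto F_k$ as the composite of the contragredient of $L_0$ with the diagram automorphism $s_k\mapsto s_{n-k}$ of $S_n$, hence a homomorphism isomorphic to $L_0$ over $\mathbb{Q}$ (alternatively one verifies the braid relations $(F_kF_{k+1})^3=I_{n-1}$ and $(F_kF_l)^2=I_{n-1}$ for $|k-l|\geq 2$ by hand). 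Conjugation by $V_d\in\GL_{n-1}(\mathbb{Q})$ preserves $\mathbb{Q}$-equivalence, so the only substantive point is integrality. Since $V_d$ is upper triangular with $\det V_d=d$, the inverse $V_d^{-1}$ has denominators dividing $d$, and a direct calculation shows that the only potentially non-integral entries of $V_d^{-1}F_kV_d$ are integer multiples of $n$ divided by $d$; the hypothesis $d\mid n$ is exactly what clears these denominators. This is where the divisibility condition enters.

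The conceptual heart is a lattice classification. Because the rational reflection representation $V\coloneqq L_0\otimes_{\mathbb{Z}}\mathbb{Q}$ is absolutely irreducible, $\mathrm{End}_{\mathbb{Q}[S_n]}(V)=\mathbb{Q}$, and therefore isomorphism classes of $\mathbb{Z}$-forms of $V$ correspond bijectively to $S_n$-invariant lattices in $V$ up to homothety, i.e. up to scaling by $\mathbb{Q}^\times$. Identifying $V$ with the reflection representation of type $A_{n-1}$, I would prove that every invariant lattice $\Lambda$ is homothetic to a lattice between the root lattice $Q=L_0$ and the weight lattice $P$: after rescaling $\Lambda$ so that $\{\langle v,\alpha^\vee\rangle : v\in\Lambda,\ \alpha \text{ a root}\}$ generates $\mathbb{Z}$, one obtains $\Lambda\subseteq P$, and, using that all roots of $A_{n-1}$ form a single $S_n$-orbit, that some root and hence all of $Q$ lies in $\Lambda$. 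The intermediate lattices $Q\subseteq\Lambda\subseteq P$ are in bijection with the subgroups of $P/Q\cong\mathbb{Z}/n$, i.e. with the divisors of $n$, so there are exactly $\tau(n)$ isomorphism classes of $\mathbb{Z}$-forms — provided distinct intermediate lattices lie in distinct homothety classes.

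That last point also yields (b): since equivalence over $\mathbb{Z}$ amounts to homothety, it suffices to separate the $\tau(n)$ intermediate lattices by a discrete homothety invariant, for instance the $\mathbb{F}_p[S_n]$-module structure of their reductions at the primes $p\mid n$, where the reflection representation becomes reducible and the position of $\Lambda$ relative to $Q$ becomes visible. Granting (b), part (c) follows by counting: the construction in (a) produces $\tau(n)$ pairwise inequivalent $\mathbb{Z}$-forms, and the classification shows there are only $\tau(n)$ classes, so the $V_d^{-1}F_kV_d$ must exhaust all of them, and the $\mathbb{Q}$-equivalence to $L_0$ was noted in (a). I expect the main obstacle to be the classification lemma of the third paragraph — arguing cleanly that \emph{every} invariant lattice, not merely those already sitting between $Q$ and $P$, is homothetic to an intermediate one — together with pinning down the invariant that separates the homothety classes; by contrast the integrality check in (a) is routine once the factor of $n$ is tracked.
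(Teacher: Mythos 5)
The paper does not prove this statement at all: it is quoted from Craig and used as a black box, so there is no in-paper proof to compare against. Your outline is the standard lattice-theoretic route (and, as far as I can tell, essentially Craig's): identify $\mathbb{Z}$-forms of the absolutely irreducible representation $V=L_0\otimes_{\mathbb{Z}}\mathbb{Q}$ with $S_n$-invariant full lattices up to homothety via Schur's lemma, show that every invariant lattice is homothetic to one lying between the root lattice $Q$ and the weight lattice $P$ of type $A_{n-1}$, and use $P/Q\cong\mathbb{Z}/n$ to index the classes by the divisors of $n$. Your reading of the statement ($S_n$ rather than $S_{n+1}$, divisors of $n$) is the right correction of the paper's typo, and the normalization-plus-single-orbit argument for squeezing an arbitrary invariant lattice between $Q$ and $P$ is sound.

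Two points in your sketch need tightening. First, for pairwise inequivalence you propose separating the intermediate lattices by the $\mathbb{F}_p[S_n]$-module structure of their reductions at primes $p\mid n$; when $p^2\mid n$ there are at least three intermediate lattices at $p$, and it is not clear that this invariant separates the middle ones. But you have already built a cleaner invariant into your normalization step: for an invariant lattice $\Lambda$ the set $\{\langle v,\alpha^\vee\rangle : v\in\Lambda\}$ is a subgroup $c\mathbb{Z}\subset\mathbb{Q}$ that scales linearly under homothety, and it equals $\mathbb{Z}$ for \emph{every} lattice between $Q$ and $P$ (it lies in $\mathbb{Z}$ because $\Lambda\subseteq P$ and contains $\langle\alpha_i,\alpha_{i+1}^\vee\rangle=-1$ because $Q\subseteq\Lambda$); hence any homothety carrying one intermediate lattice to another is multiplication by $\pm1$, and distinct intermediate lattices are inequivalent. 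Second, your counting argument for the exhaustiveness claim is circular as stated: to conclude that the $\tau(n)$ constructed representations fill the $\tau(n)$ homothety classes you must first know that they are pairwise inequivalent, and that requires identifying which intermediate lattice the lattice determined by $V_d$ actually is — for instance via the relation $V_d^{-1}V_n=\diag(\tfrac{n}{d},1,\ldots,1)$ noted later in the paper, which pins down the indices between the lattices for different $d$. That identification is a finite computation with the explicit matrices, but it is a necessary step and not a corollary of the abstract classification.
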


We will denote the $\mathbb{Z}$-representation given by the matrices $V_d^{-1} F_k V_d$ by $L_d$. By explicit calculations we find the following properties of the matrices used in the theorem:

\begin{lemma}  \ \label{FkExplicit}
\begin{compactenum}[a)]
\item For $1 \leq k \leq n-3$ we have $V_d^{-1} F_k V_d=F_k$.
\item The first row of $V_d^{-1} F_{n-2} V_d$ equals $(1,\frac{n}{d},0, \ldots, 0)$.
\item The first row of $V_d^{-1} F_{n-1} V_d$ equals $(1-n, -\frac{n}{d}(n-2), -\frac{n}{d}(n-3), \ldots, -\frac{n}{d})$.
\end{compactenum}
\end{lemma}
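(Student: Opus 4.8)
The plan is to reduce the three claims to two structural facts about $V_d$ followed by a single short computation of a row vector. First I would record that $V_d$ differs from the identity only in its first row: explicitly $V_d = I_{n-1} + e_1 a^T$ with $a^T = (d-1, n-2, n-3, \ldots, 1)$, where $e_1$ denotes the first standard basis vector. Inverting a matrix of this shape is immediate, and the first row of $V_d^{-1}$ is $w^T \coloneqq (\tfrac1d, -\tfrac{n-2}{d}, -\tfrac{n-3}{d}, \ldots, -\tfrac1d)$; since $w^T = e_1^T V_d^{-1}$ we have the identity $w^T V_d = e_1^T$, which is what makes the final multiplication collapse.

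Next I would exploit that $\Delta_k \coloneqq F_k - I_{n-1}$ is supported in the single column $n-k$ (the boundary conventions $E^{0,j} = E^{n,j} = 0$ taking care of the extreme values of $k$). The first row of the conjugate is $e_1^T V_d^{-1} F_k V_d = w^T F_k V_d$, and because $w^T \Delta_k$ has a single nonzero entry, in position $n-k$, we get $w^T F_k = w^T + \alpha_k\, e_{n-k}^T$ with $\alpha_k = w_{n-k-1} - 2 w_{n-k} + w_{n-k+1}$ (boundary terms omitted). Multiplying on the right by $V_d$ and using $w^T V_d = e_1^T$ gives the clean formula
\[
e_1^T V_d^{-1} F_k V_d = e_1^T + \alpha_k\,(e_{n-k}^T V_d).
\]
Substituting $w_i = -\tfrac{n-i}{d}$ for $i \ge 2$ then yields $\alpha_k = 0$ for $k \le n-3$, $\alpha_{n-2} = \tfrac{n}{d}$ and $\alpha_{n-1} = -\tfrac{n}{d}$; combined with $e_2^T V_d = e_2^T$ and $e_1^T V_d = (d, n-2, \ldots, 1)$ this gives exactly the first rows claimed in b) and c).

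For part a) I still need the entire matrix, not only its first row. Here I would add the observation that for $k \le n-2$ the first column of $F_k$ is $e_1$, i.e. $F_k e_1 = e_1$; writing $V_d = I_{n-1} + e_1 a^T$, $V_d^{-1} = I_{n-1} + e_1 b^T$ and expanding the product shows $V_d^{-1} F_k V_d - F_k = e_1 \rho^T$, a correction living in the first row only. Hence for $k \le n-3$ the already-computed vanishing $\alpha_k = 0$ forces the first row to be unchanged, so $\rho^T = 0$ and $V_d^{-1} F_k V_d = F_k$. (Equivalently one reads off a) from the block form $F_k = \diag(1,B)$ together with the left-eigenvector identity $v^T B = v^T$ for $v = (n-2, \ldots, 1)$, which comes down to $(k+1) - k + (k-1) = k$.)

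The computation itself is routine; the only genuine care is the bookkeeping at the two boundary values. The main obstacle is keeping the index shifts and the conventions $E^{0,j} = E^{n,j} = 0$ straight, because it is precisely at $k = n-2$ and $k = n-1$ that the modified column of $F_k$ moves into the first two columns, so that $e_{n-k}^T V_d$ stops being a standard basis vector and instead picks up the nontrivial first row of $V_d$; this is exactly what turns the single scalar $\alpha_{n-1} = -\tfrac{n}{d}$ into the full nonzero row of part c).
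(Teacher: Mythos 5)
Your proof is correct, and it matches the paper's intent: the paper offers no written argument for this lemma, stating only that it follows ``by explicit calculations,'' and your structured computation (writing $V_d = I_{n-1} + e_1 a^T$, inverting it explicitly, and tracking the single modified column of $F_k$) is a clean and complete version of exactly that calculation. All the index bookkeeping at $k = n-2$ and $k = n-1$, the values $\alpha_k = 0, \tfrac{n}{d}, -\tfrac{n}{d}$, and the reduction of part a) to the vanishing of the first-row correction check out.
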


Craig's theorem tells us that the representation $L_0$ must be equivalent to one of the $L_d$ over $\mathbb{Z}$; it will become clear later that this is in fact $L_n$. 

\begin{lemma} \label{PolyNecess}
Assume that $n \geq 5$ and let $d>0$ be a divisor of $n$. If the ring of invariants $R[L_d]^{S_n}$ is a polynomial ring for some ring $R$ with $\mathbb{Z} \subseteq R \subseteq \mathbb{Q}$, then $\frac{d}{n} \in R$.
\end{lemma}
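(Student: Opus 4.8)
The plan is to prove the contrapositive: assuming $d/n \notin R$, I produce a prime modulo which the reduction of any putative polynomial generating set fails to capture the whole invariant ring, contradicting the hypothesis that $R[L_d]^{S_n}$ is polynomial. Since every ring $R$ with $\mathbb{Z} \subseteq R \subseteq \mathbb{Q}$ is of the form $\mathbb{Z}[p^{-1} : p \in S]$ for a set of primes $S$, the element $d/n = 1/(n/d)$ lies in $R$ if and only if every prime dividing the positive integer $n/d$ is a unit in $R$. Hence if $d/n \notin R$ I may fix a prime $p \mid (n/d)$ that is not a unit in $R$; then $\maxId = pR$ is a maximal ideal with $R/\maxId \cong \mathbb{F}_p$, and because $d \mid n$ I also have $p \mid n$.

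Now suppose, for contradiction, that $R[L_d]^{S_n}$ is a polynomial ring. Choosing a homogeneous system of generators I write $R[L_d]^{S_n} = R[h_1, \ldots, h_{n-1}]$. Flat base change along $R \hookrightarrow \mathbb{Q}$ (taking invariants commutes with localization) gives $\mathbb{Q}[L_d]^{S_n} = \mathbb{Q}[h_1, \ldots, h_{n-1}]$, so the $h_i$ are algebraically independent and their degrees are the fundamental degrees $2, 3, \ldots, n$ of the reflection representation of $S_n$; in particular $\deg h_i \geq 2$ for every $i$ and $\prod_i \deg h_i = n! = |S_n|$. Applying \cref{AlgIndResidue} with $f_i = h_i$, property (i) holds by assumption, so property (iii) tells me that the reductions $\overline{h_1}, \ldots, \overline{h_{n-1}}$ are algebraically independent over $\mathbb{F}_p$.

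The crucial observation comes from reducing the defining matrices modulo $p$. By \cref{FkExplicit} a) the generator $V_d^{-1} F_k V_d = F_k$ for $k \leq n-3$ has first row $(1,0,\ldots,0)$, since every non-identity entry of $F_k$ sits in a row of index $\geq 2$. For $k = n-2$ and $k = n-1$, parts b) and c) give first rows $(1, \tfrac{n}{d}, 0, \ldots, 0)$ and $(1-n, -\tfrac{n}{d}(n-2), \ldots, -\tfrac{n}{d})$, both of which reduce to $(1, 0, \ldots, 0)$ modulo $p$ because $p \mid (n/d)$ and $p \mid n$. Thus every generator of $L_d \otimes \mathbb{F}_p$, and therefore every element of the image of $S_n$, has first row $(1,0,\ldots,0)$; equivalently, the coordinate function $x_1$ is a nonzero $S_n$-invariant linear form, so $x_1 \in \mathbb{F}_p[L_d]^{S_n}$.

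Finally I would upgrade algebraic independence to generation using \cref{KemperAlgInd}. The representation $L_d \otimes \mathbb{F}_p$ is faithful: for $n \geq 5$ the only nontrivial proper normal subgroup of $S_n$ is $A_n$, the reduction is nontrivial (for instance $F_1 \not\equiv I_{n-1}$), and two distinct adjacent transpositions have distinct images (for instance $F_1 \not\equiv F_2$), which rules out the kernel being $A_n$. Since $\overline{h_1}, \ldots, \overline{h_{n-1}}$ are algebraically independent with $\prod_i \deg \overline{h_i} = |S_n|$, \cref{KemperAlgInd} gives $\mathbb{F}_p[L_d]^{S_n} = \mathbb{F}_p[\overline{h_1}, \ldots, \overline{h_{n-1}}]$. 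This ring has no nonzero element of degree $1$ because all $\deg \overline{h_i} \geq 2$, contradicting $x_1 \in \mathbb{F}_p[L_d]^{S_n}$. The contradiction forces $d/n \in R$. The conceptual heart of the argument is the mod-$p$ computation exposing the spurious linear invariant $x_1$; the technical crux — and the step I expect to require the most care — is passing from ``the reduced invariants are algebraically independent'' to ``they generate all of $\mathbb{F}_p[L_d]^{S_n}$'', which is precisely what \cref{KemperAlgInd} supplies once faithfulness of $L_d \otimes \mathbb{F}_p$ has been checked.
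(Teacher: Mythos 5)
Your proof is correct and follows essentially the same route as the paper: pick a prime $p$ dividing $n/d$ that is not a unit in $R$, check faithfulness of the mod-$p$ reduction via the normal subgroup structure of $S_n$, combine \cref{AlgIndResidue} with \cref{KemperAlgInd} to see that the reduced generators (all of degree $\geq 2$) generate $\mathbb{F}_p[\overline{L_d}]^{S_n}$, and derive a contradiction from the linear invariant $x_1$ produced by \cref{FkExplicit}. The only cosmetic difference is that you phrase it as a contrapositive and spell out a few routine points (subrings of $\mathbb{Q}$ as localizations, $p \mid n$) that the paper leaves implicit.
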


\begin{proof}
Assume that $\frac{d}{n} \notin R$, so there is a prime number $p$ which divides $\frac{n}{d}$ and is not a unit in $R$. Let $\overline{L_d}$ denote the $\mathbb{F}_p$-representation of $S_n$ which is given by reducing all the matrices defining $L_d$ modulo $p$. By \cref{FkExplicit} we find that these reduced matrices for the identity and the transpositions $(1,2)$ and $(2,3)$ are pairwise distinct, so the image of $S_n$ in $Gl_{n-1}(\mathbb{F}_p)$ consists of at least three elements. For $n \geq 5$ this implies that the map $S_n \to Gl_{n-1}(\mathbb{F}_p)$ must be injective since otherwise $A_n$ would be contained in the kernel.

Since $R[L_d]^{S_n}$ is a polynomial ring it is generated by homogeneous elements of degrees $2, \ldots, n$ which also generate the invariant ring over $\mathbb{Q}=\Quot(R)$. The classes of these elements over $\mathbb{F}_p$ are therefore algebraically independent by Proposition \ref{AlgIndResidue} and hence generate the invariant ring $\mathbb{F}_p[\overline{L_d}]^{S_n}$ by \cref{KemperAlgInd} since we have seen above that the representation $\overline{L_d}$ is faithful.

In particular, this shows that $\mathbb{F}_p[\overline{L_d}]^{S_n}$ contains no element of degree $1$. \cref{FkExplicit} shows that all transpositions $(k,k+1)$ with $1 \leq k <n$ act on $\overline{L_d}$ by matrices in which the first row equals $(1, 0, \ldots, 0)$. Therefore all these elements fix $x_1 \in \mathbb{F}_p[\overline{L_d}]$ and since these transpositions generate $S_n$ it follows that $x_1 \in \mathbb{F}_p[\overline{L_d}]^{S_n}$, a contradiction.
\end{proof}

\cref{PolyNecess} shows that $\mathbb{Z}[L_d]^{S_n}$ is not a polynomial ring for $d \neq n$. Together with \cref{SymmL0}, this implies that $L_0$ cannot be equivalent to any of these, so it must be equivalent to $L_n$. In particular, $\mathbb{Z}[L_n]^{S_n}$ is a polynomial ring. An explicit calculation shows that $V_d^{-1} V_n=\diag(\frac{n}{d}, 1, \ldots, 1)$, so if $R$ is a ring with $\mathbb{Z} \subseteq R \subseteq \mathbb{Q}$ and $\frac{d}{n} \in R$, then $L_d$ and $L_n$ are equivalent over $R$, so $R[L_d]^{S_n}$ is a polynomial ring. Together with \cref{PolyNecess}, this yields the following result.

\begin{theorem}
Let $R$ be a ring with $\mathbb{Z} \subseteq R \subseteq \mathbb{Q}$ and let $n \geq 5$ be an integer and $d>0$ a divisor of $n$. Then $R[L_d]^{S_n}$ is a polynomial ring if and only if $\frac{d}{n} \in R$.
\end{theorem}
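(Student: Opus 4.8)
The plan is to prove the two implications separately and lean heavily on the results already in place. For the direction ``$R[L_d]^{S_n}$ polynomial $\implies \frac{d}{n}\in R$'' there is nothing to do: this is exactly \cref{PolyNecess} (recall $n\ge 5$ is assumed there as well).

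For the converse I would first identify which $L_d$ is already known to have a polynomial invariant ring over $\mathbb{Z}$. Craig's theorem gives that $L_0$ is $\mathbb{Z}$-equivalent to some $L_d$, and \cref{SymmL0} says $\mathbb{Z}[L_0]^{S_n}$ is a polynomial ring. Applying \cref{PolyNecess} with $R=\mathbb{Z}$, however, shows that $\mathbb{Z}[L_d]^{S_n}$ can be polynomial only when $\frac{d}{n}\in\mathbb{Z}$; since $d$ is a positive divisor of $n$ this forces $d=n$. Hence $L_0$ is $\mathbb{Z}$-equivalent to $L_n$, so $\mathbb{Z}[L_n]^{S_n}$ is a polynomial ring, say $\mathbb{Z}[L_n]^{S_n}=\mathbb{Z}[h_1,\ldots,h_{n-1}]$ with the $h_i$ homogeneous.

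Next I would use the explicit change of basis $V_d^{-1}V_n=\diag(\frac{n}{d},1,\ldots,1)$, which conjugates the matrices $V_d^{-1}F_kV_d$ defining $L_d$ into those defining $L_n$. Because $d\mid n$ we always have $\frac{n}{d}\in\mathbb{Z}\subseteq R$, and the hypothesis $\frac{d}{n}\in R$ puts the inverse $\diag(\frac{d}{n},1,\ldots,1)$ into $\GL_{n-1}(R)$ too. Thus this matrix is an $S_n$-equivariant isomorphism $L_d\cong L_n$ over $R$, inducing a graded $R$-algebra isomorphism $R[L_d]^{S_n}\cong R[L_n]^{S_n}$; it remains only to see that $R[L_n]^{S_n}$ is a polynomial ring. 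Here I would invoke \cref{AlgIndResidue}: every such $R$ is a localization of $\mathbb{Z}$, hence a Dedekind domain with $\Quot(R)=\mathbb{Q}$, whose maximal ideals correspond to the primes $p$ not inverted in $R$, with residue field $\mathbb{F}_p$. Since $\mathbb{Z}[L_n]^{S_n}=\mathbb{Z}[h_1,\ldots,h_{n-1}]$ is polynomial, \cref{AlgIndResidue} applied over $\mathbb{Z}$ shows the reductions of the $h_i$ are algebraically independent over $\mathbb{F}_p$ for every prime $p$, and in particular for those $p$ corresponding to maximal ideals of $R$. Condition (iii) of \cref{AlgIndResidue} for $R$ is then met, so $R[L_n]^{S_n}=R[h_1,\ldots,h_{n-1}]$ is a polynomial ring, which finishes the argument.

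The genuine content sits entirely in the middle step: the requirement that $\diag(\frac{n}{d},1,\ldots,1)$ be invertible over $R$ is precisely the condition $\frac{d}{n}\in R$, which is the same arithmetic constraint that \cref{PolyNecess} produces from the other side, and this is what makes the two halves of the equivalence match. I therefore expect no real obstacle in assembling the theorem, since the heavy lifting has already been done in \cref{SymmL0} and \cref{PolyNecess}; the only point needing care is the descent of polynomiality from $\mathbb{Z}$ to $R$, which is clean precisely because the mod-$p$ algebraic-independence criterion for $R$ involves only a subset of the primes already handled over $\mathbb{Z}$.
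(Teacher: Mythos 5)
Your proposal is correct and follows essentially the same route as the paper: \cref{PolyNecess} for necessity, the identification of $L_n$ as the representation $\mathbb{Z}$-equivalent to $L_0$ via \cref{SymmL0}, and the conjugation by $V_d^{-1}V_n=\diag(\frac{n}{d},1,\ldots,1)$ for sufficiency. The only difference is that you make explicit, via \cref{AlgIndResidue}, the descent of polynomiality from $\mathbb{Z}[L_n]^{S_n}$ to $R[L_n]^{S_n}$, a step the paper leaves implicit.
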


It remains to consider the cases $n \in \{ 3,4 \}$. We first consider the homomorphism $S_3 \to Gl_2(\mathbb{F}_3)$ given by the representation $L_1$ of $S_3$ and reduction modulo $3$. An explicit calculation shows that the images of the identity and the transpositions $(1,2)$ and $(2,3)$ in $Gl_2(\mathbb{F}_3)$ are pairwise distinct. Since $S_3$ contains no normal subgroup of order $2$, this implies that the homomorphism is injective and therefore with the same argument as in the proof of \cref{PolyNecess} we obtain that $R[L_1]^{S_3}$ con only be a polynomial ring if $\frac{1}{3} \in R$. Since $R[L_0]^{S_3}$ is a polynomial ring by \cref{SymmL0}, $L_0$ must again be $\mathbb{Z}$-equivalent to $L_3$, so $\mathbb{Z}[L_3]^{S_3}$ is a polynomial ring and $R[L_1]^{S_3}$ is a polynomial ring if and only if $\frac{1}{3} \in R$.

For $S_4$ we first consider the homomorphism $S_4 \to Gl_3(\mathbb{F}_2)$ given by the representation $L_1$ and reduction modulo $2$. An explicit calculation here shows that $(1,2)(3,4)$ is not in the kernel of this map, so the homomorphism must be injective since $(1,2)(3,4)$ is contained in every non-trivial normal subgroup of $S_4$. Again by using the same argument as in the proof of \cref{PolyNecess} we find that $R[L_1]^{S_4}$ can only be a polynomial ring if $\frac{1}{2} \in R$. For the representations $L_2$ and $L_4$, the corresponding homomorphisms $S_4 \to Gl_3(\mathbb{F}_2)$ are not injective, so instead we compute explicit generators for the invariant rings over $\mathbb{Q}$ using \texttt{Singular}. If we identify $\mathbb{Q}[L_4]$ with $\mathbb{Q}[x,y,z]$, we find $\mathbb{Q}[L_4]^{S_4}=\mathbb{Q}[f,g,h]$ where
\begin{align*} 
f:&=6x^2+8xy+3y^2+4xz+3yz+z^2, \\
g:&=8x^3+16x^2y+10xy^2+2y^3+8x^2z+10xyz+3y^2z+2xz^2+yz^2, \\
h:&=3x^4+8x^3y+7x^2y^2+2xy^3+4x^3z+7x^2yz+3xy^2z+x^2z^2+xyz^2.
\end{align*}
By appling Proposition \ref{AlgIndLM} with the lexicographic ordering with $z>y>x$ we obtain that $\mathbb{Z}[L_4]^{S_4}=\mathbb{Z}[f,g,h]$ is a polynomial ring.

For $L_2$ we also first use \texttt{Singular} to obtain generators for $\mathbb{Q}[L_2]^{S_4}$ and get $\mathbb{Q}[L_2]^{S_4}=\mathbb{Q}[f',g',h']$ where
\begin{align*} 
f':&=3x^2+8xy+6y^2+4xz+6yz+2z^2, \\
g':&=x^3+4x^2y+5xy^2+2y^3+2x^2z+5xyz+3y^2z+xz^2+yz^2, \\
h':&=3x^4+16x^3y+28x^2y^2+16xy^3+8x^3z+28x^2yz+24xy^2z+4x^2z^2+8xyz^2.
\end{align*}

Again Proposition \ref{AlgIndLM} with the lexicographic ordering with $z>y>x$ gives that $\mathbb{Z}[\frac{1}{2}][L_2]^{S_4}=\mathbb{Z}[\frac{1}{2}][f',g',h']$, but $f',g',h'$ do not generate the invariant ring over $\mathbb{Z}$ since $k'=\frac{1}{4}(f'^2+h')$ has coefficients in $\mathbb{Z}$. But clearly we have $\mathbb{Z}[\frac{1}{2}][f',g',h']=\mathbb{Z}[\frac{1}{2}][f',g',k']$ and by using \texttt{Singular} we find that $f',g',k'$ are algebraically independent modulo $2$, so by Proposition \ref{AlgIndResidue} we get that $\mathbb{Z}[L_2]^{S_4}=\mathbb{Z}[f',g',k']$.

So we can summarize the results of this section as follows:

\begin{theorem}
Let $R$ be a ring with $\mathbb{Z} \subseteq R \subseteq \mathbb{Q}$ and let $n \geq 3$ be an integer and $d>0$ a divisor of $n$. Then $R[L_d]^{S_n}$ is a polynomial ring if and only if $\frac{d}{n} \in R$ except in the case $n=4$ and $d=2$ where $\mathbb{Z}[L_2]^{S_4}$ is also a polynomial ring.
\end{theorem}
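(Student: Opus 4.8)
The plan is to assemble the statement from the results already established, splitting according to the size of $n$. For $n \geq 5$ the full equivalence is exactly the theorem proved above, so nothing new is needed there; the remaining work is the two small cases $n=3$ and $n=4$, which I would treat divisor by divisor. In both of these the divisor $d=n$ is immediate, since then $\frac{d}{n}=1 \in R$ for every admissible $R$ and $L_n$ already has a polynomial invariant ring (for $n=3$ because $L_3$ is $\mathbb{Z}$-equivalent to $L_0$, so \cref{SymmL0} applies, and for $n=4$ because of the explicit identity $\mathbb{Z}[L_4]^{S_4}=\mathbb{Z}[f,g,h]$ obtained via \cref{AlgIndLM}).

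Next I would handle $d=1$ for $n \in \{3,4\}$, establishing both directions. For the sufficiency of $\frac{d}{n}\in R$ I use the explicit relation $V_d^{-1}V_n=\diag(\frac{n}{d},1,\ldots,1)$: since $\frac{n}{d}\in\mathbb{Z}$ and the inverse change of basis is $\diag(\frac{d}{n},1,\ldots,1)$, the representations $L_d$ and $L_n$ become equivalent over any $R$ containing $\frac{d}{n}$, whence $R[L_d]^{S_n}$ inherits polynomiality from $\mathbb{Z}[L_n]^{S_n}$ by flat base change along $\mathbb{Z}\hookrightarrow R$. For the necessity I would reproduce the argument of \cref{PolyNecess}: an explicit computation shows that the reduction of $L_1$ modulo the relevant prime ($3$ for $n=3$, $2$ for $n=4$) is faithful, so by \cref{KemperAlgInd} together with \cref{AlgIndResidue} a polynomial invariant ring over $R$ would reduce to the invariant ring over the residue field, which contains no element of degree one; but \cref{FkExplicit} forces the first coordinate $x_1$ to be fixed by all transpositions modulo that prime, a contradiction unless $\frac{1}{3}\in R$ (respectively $\frac{1}{2}\in R$). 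Finally I would note the numerical point that $\frac{1}{4}\in R \iff \frac{1}{2}\in R$, so that for $n=4,\ d=1$ this condition is precisely $\frac{d}{n}\in R$.

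The genuine obstacle, and the source of the stated exception, is $n=4,\ d=2$, where $\frac{d}{n}=\frac{1}{2}\notin\mathbb{Z}$ yet the invariant ring over $\mathbb{Z}$ is still polynomial. The key is that the \texttt{Singular} generators $f',g',h'$ only witness polynomiality after inverting $2$, but replacing $h'$ by $k'=\frac{1}{4}(f'^2+h')$, which happens to have integral coefficients, gives a candidate generating set defined over $\mathbb{Z}$; checking that $f',g',k'$ remain algebraically independent modulo $2$ then upgrades $\mathbb{Z}[\tfrac{1}{2}][L_2]^{S_4}=\mathbb{Z}[\tfrac{1}{2}][f',g',k']$ to the integral identity $\mathbb{Z}[L_2]^{S_4}=\mathbb{Z}[f',g',k']$ via \cref{AlgIndResidue}. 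Since this polynomiality is preserved under every base change $\mathbb{Z}\hookrightarrow R$, the ring $R[L_2]^{S_4}$ is a polynomial ring for all admissible $R$, regardless of whether $\frac{1}{2}\in R$. I expect the only nontrivial verifications to be these explicit computations for $n=4$ — finding the corrected generator $k'$ and confirming its algebraic independence modulo $2$ — while the rest is a matter of organizing the earlier results.
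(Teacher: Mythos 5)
Your proposal is correct and follows essentially the same route as the paper: reduce $n\geq 5$ to the earlier theorem, settle $n=3$ and $n=4$ divisor by divisor using the injectivity of the mod-$p$ reduction together with the $x_1$-fixing argument from \cref{PolyNecess} and \cref{FkExplicit} for necessity, the relation $V_d^{-1}V_n=\diag(\frac{n}{d},1,\ldots,1)$ for sufficiency, and the corrected generator $k'=\frac{1}{4}(f'^2+h')$ with algebraic independence modulo $2$ for the exceptional case $n=4$, $d=2$. The only cosmetic difference is that you invoke flat base change to propagate polynomiality from $\mathbb{Z}$ to $R$, where the paper works with explicit generators, but the two are interchangeable here.
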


\section{The primitive groups of degree 2} \label{SectionDegree2}

In this section we discuss the arithmetic invariant rings of the primitive pseudoreflection groups of degree $2$, that is, the groups number 4 to 22 in the classification by Shephard and Todd \cite{st}. For each of these groups, the smallest number field over which they are defined together with the reflection representations over the ring of integers of this number field can be found in \cite{feit2}. In order to simplify the arguments, we will often work over a number field that is larger than necessary. The main step in order to see that this does not change the results is the following proposition.

\begin{prop} \label{PolyFieldExt}
Let $R$ be a principal ideal domain with $K=\Quot(R)$, let $L/K$ be a finite field extension and let $S$ be the integral closure of $R$ in $L$. Moreover, let $G \subseteq Gl_n(R)$ be a finite group. If the ring of invariants $S[x_1, \ldots, x_n]^G$ is isomorphic to a polynomial ring, then $R[x_1, \ldots, x_n]^G$ is also isomorphic to a polynomial ring.
\end{prop}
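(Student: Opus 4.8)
The plan is to show that $B \coloneqq R[x_1,\ldots,x_n]^G$ is regular and then to use its grading to recognise it as a polynomial ring, rather than trying to descend an explicit generating system from $S$ to $R$ (such a system need not exist, as the behaviour of $L_2$ over $\mathbb{Z}$ versus $\mathbb{Z}[\tfrac12]$ will illustrate). By Noether's finiteness theorem $B$ is a finitely generated $R$-algebra, hence Noetherian, and it is a domain. Since $R$ is Dedekind, the torsion-free $R$-module $S$ is flat, and it is faithfully flat because $S$ is integral over $R$, so that $\maxId S \neq S$ for every maximal ideal $\maxId \subset R$ by lying over. As forming invariants under a finite group commutes with flat base change, we obtain $B \otimes_R S = S[x_1,\ldots,x_n]^G$, which is a polynomial ring by hypothesis and therefore regular.

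Next I would descend regularity along the faithfully flat homomorphism $B \to B \otimes_R S$. For a prime $\primId \subset B$, faithful flatness yields a prime $\mathfrak{Q} \subset B \otimes_R S$ with $\mathfrak{Q} \cap B = \primId$, and the induced local homomorphism $B_{\primId} \to (B \otimes_R S)_{\mathfrak{Q}}$ is flat. Its target is a localisation of the regular ring $B \otimes_R S$, hence regular, so by the descent of regularity along a flat local homomorphism the ring $B_{\primId}$ is regular as well. Thus $B$ is a regular domain.

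It remains to pass from the regular graded ring $B$ to a polynomial ring. Because $R[x_1,\ldots,x_n]$ is finite over $B$, we have $\dim B = \dim R[x_1,\ldots,x_n] = n+1$. Fix a maximal ideal $\maxId \subset R$ and let $\mathfrak{M} \coloneqq \maxId B + B_+$ with $B_+ \coloneqq \bigoplus_{d>0} B_d$; this is a maximal ideal with residue field $R/\maxId$ and, as $B$ is a graded domain with $B_0 = R$, of height $n+1$. Regularity gives $\dim_{R/\maxId} \mathfrak{M}/\mathfrak{M}^2 = n+1$. Splitting off the one-dimensional degree-zero contribution $\maxId/\maxId^2$ of a uniformiser of $R_\maxId$ leaves $n$ for the positive-degree part $B_+/(\maxId B_+ + B_+^2)$, so $B_+$ needs exactly $n$ homogeneous generators locally at each $\maxId$. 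Consequently the finitely generated graded $R$-module $B_+/B_+^2$ is locally free of constant rank $n$, hence projective, hence free over the principal ideal domain $R$. Any homogeneous $R$-basis $f_1,\ldots,f_n$ of it generates $B$ as an $R$-algebra, and the resulting surjection $R[T_1,\ldots,T_n] \twoheadrightarrow B$ is a morphism of domains of the same dimension $n+1$, so its kernel vanishes and $B = R[f_1,\ldots,f_n]$ is a polynomial ring.

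The main obstacle is the last paragraph: one must pin down the local generator count of $B_+$ precisely at every maximal ideal — including the primes of $S$ ramified over $R$, where a naive comparison of $R$- and $S$-generators breaks down — and deduce that $B_+/B_+^2$ is free of rank $n$ over $R$. Once this rank is controlled, the identification with a polynomial ring is formal. By comparison, the regularity of $B$ and the fact that $K[x_1,\ldots,x_n]^G$ is itself a polynomial ring, which drops out by applying the same faithfully flat descent of regularity to the field extension $K \subseteq L$, are routine.
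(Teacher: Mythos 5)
Your argument is correct, and its core coincides with the paper's: both proofs descend regularity from $S[x_1,\ldots,x_n]^G$ to $R[x_1,\ldots,x_n]^G$ along a flat local homomorphism (the paper invokes \cite[Theorem 2.2.12]{bh} for exactly the descent step you perform). The differences are in the packaging. For the flatness, the paper uses that $S$ is free over the principal ideal domain $R$ with a basis $s_1,\ldots,s_k$ on which $G$ acts trivially, and concludes that $S[x_1,\ldots,x_n]^G=\bigoplus_i R[x_1,\ldots,x_n]^G s_i$ is free over $R[x_1,\ldots,x_n]^G$; your flat-base-change identity $R[x_1,\ldots,x_n]^G\otimes_R S=S[x_1,\ldots,x_n]^G$ is the same computation in different clothing, and in fact needs only flatness of $S$ over $R$ rather than finiteness. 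For the final step, the paper simply quotes \cite[Corollary 2.2]{ainvpseudo}, which states that an invariant ring of this kind is a polynomial ring as soon as it is regular; you instead reprove this via the rank of $B_+/B_+^2$ for your $B=R[x_1,\ldots,x_n]^G$. That hand-made version is sound, but two points deserve a fuller justification than you give: the height of $\maxId B+B_+$ being $n+1$ follows from going-down for the finite extension $B\subseteq R[x_1,\ldots,x_n]$ (using that $B$, as an invariant ring of a normal domain, is normal), not merely from $B$ being graded with $B_0=R$; and the generic rank of $B_+/B_+^2$ being $n$ uses that $K[x_1,\ldots,x_n]^G$ is a polynomial ring in $n$ variables, which, as you note, comes from the same descent applied to $K\subseteq L$. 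What your route buys is independence from the external corollary; what the paper's route buys is brevity.
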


\begin{proof}
Let $A \coloneqq R[x_1, \ldots, x_n]$ and $B \coloneqq S[x_1, \ldots, x_n]$. Since $R$ is a principal ideal domain and $S$ is finitely generated as an $R$-module, $S$ is in fact a free $R$-module; let $\{ s_1, \ldots, s_k \}$ be a basis. Then $s_1, \ldots, s_k$ also form a basis of $B$ as an $A$-module. For $f=f_1 s_1 + \ldots + f_k s_k \in B$ with $f_1, \ldots, f_k \in A$ and $\sigma \in G$ we have $\sigma(f)=\sigma(f_1) s_1 + \ldots + \sigma(f_k) s_k$ since $G$ acts trivially on $S$. So if $f \in B^G$ then we must have $f_1, \ldots, f_k \in A^G$ and therefore $s_1, \ldots, s_k$ generate $B^G$ as an $A^G$-module; in particular, $B^G$ is a free $A^G$-module.

Now let $P \subset A^G$ by a prime ideal and choose a prime ideal $Q \subset B^G$ with $P=A^G \cap Q$.
Since $B^G$ is a regular ring by assumption, the same holds for $(B^G)_Q$. Moreover, since $B^G$ is free over $A^G$, $(B^G)_Q$ is a flat $(A^G)_P$-module. Therefore, $(A^G)_P$ is also regular, see \cite[Theorem 2.2.12]{bh}. Since this holds for every $P$, we obtain that $A^G$ itself is regular. By \cite[Corollary 2.2]{ainvpseudo} it follows that $A^G$ is isomorphic to a polynomial ring over $R$.
\end{proof}

In order to apply this proposition to the groups discussed below, we need to note two additional facts. First, we need to check that for each group the field over which it is defined has class number $1$. This is indeed the case as mentioned in \cite{feit2}. Second, we need to check that over the larger fields we want to consider there are no additional integral reflection representations; this follows directly from the arguments given in \cite{feit2}.

Unless something else is said explicitly, in every application of Proposition \ref{AlgIndLM} in this section we use the lexicographic ordering with $x > y$.

\subsection{The groups number 8 to 15}

The definition of these groups is based on the following two matrices:

\[S=\frac{1}{\sqrt{2}}\begin{pmatrix} -1  & i \\ -i & 1 \end{pmatrix}, T= \frac{1}{\sqrt{2}} \begin{pmatrix} \varepsilon & \varepsilon \\ \varepsilon^3 & \varepsilon^7 \end{pmatrix}. \]

Here $\varepsilon=e^\frac{i \pi}{4}$ denotes a primitive eighth root of unity. Now one matrix reflection representation of each of the different groups can be defined as follows, where $\omega=e^\frac{2 \pi i}{3}$; see \cite[Section 4]{st}:
\begin{align*}
G_8 &=\langle \varepsilon S, T \rangle \\
G_9 &=\langle S, \varepsilon T \rangle \\
G_{10} &=\langle \varepsilon^5 \omega^2 S, -\omega T \rangle \\
G_{11} &=\langle S, \varepsilon \omega  T \rangle \\
G_{12} &=\langle S, T \rangle \\
G_{13} &=\langle \varepsilon S, iT \rangle \\
G_{14} &=\langle \varepsilon S, -\omega T \rangle \\
G_{15} &=\langle \varepsilon S, i \omega T \rangle
\end{align*}

From \cite{feit2} we know that each of these groups has only one integral reflection representation which we denote by $L$. An explicit calculation shows that this can be obtained by conjugating the generators given above with
\[ U = \frac{1}{\sqrt{2}} \begin{pmatrix} 1 & 1 \\ i & 1 \end{pmatrix}. \]
By Proposition \ref{PolyFieldExt} we may compute all the rings of invariants over $R=\mathbb{Z}[\varepsilon, \omega]$ and $K=\Quot(R)$ although many of the groups may be defined over smaller fields. We start with the smallest of the groups, namely $G_{12}$; using \texttt{Singular} we find the following two invariants:
\begin{align*}
f &=(i+1)x^5y-5x^4y^2+5(1-i)x^3y^3+5ix^2y^4-(i+1)xy^5 \\
g &=x^8+4(i-1)x^7y-14ix^6y^2+14(i+1)x^5y^3-21x^4y^4+14(1-i)x^3y^5 \\ &+14ix^2y^6-4(i+1)xy^7+y^8
\end{align*}
Using \cref{KemperAlgInd} we find $K[L]^{G_{12}}=K[f,g]$. One can check easily that $f$ and $g$ are algebraically independent modulo $i+1$, so by Propositions \ref{AlgIndLM} and \ref{AlgIndResidue} we get $R[L]^{G_{12}}=R[f,g]$. For most of the other groups we now find the rings of invariants easily: the description of the groups above shows that certain powers of $f$ and $g$ are invariant which generate the invariant ring over $K$ by \cref{KemperAlgInd} and then also over $R$ by the same arguments as above.
\begin{align*}
R[L]^{G_9}&=R[f^4,g] \\
R[L]^{G_{11}}&=R[f^4,g^3] \\
R[L]^{G_{13}}&=R[f^2,g] \\
R[L]^{G_{14}}&=R[f,g^3] \\
R[L]^{G_{15}}&=R[f^2,g^3]
\end{align*}

For $G_8$ and $G_{10}$ the situation is a little bit more complicated: for $G_8$ we find $f^4$ and $g$ as invariants, but $\deg(f^4) \cdot \deg(g)=192 \neq 96 = |G_8|$, so there must be another invariant. Indeed we find
\begin{align*}
h &=2x^{12}+12(i-1)x^{11}y-66ix^{10}y^2+110(i+1)x^9y^3-231x^8y^4+132(1-i)x^7y^5\\ &+132(i+1)x^5y^7-231x^4y^8+110(1-i)x^3y^9+66ix^2y^{10}-12(i+1)xy^{11}+2y^{12} \in R[L]^{G_8}.
\end{align*}

We have $K[L]^{G_8}=K[g,h]$ and since $g$ and $h$ are the only invariants up to constant factors of their respective degrees, $R[L]^{G_8}$ must be $R[g,h]$ if it is a polynomial ring. However, we have $f^4=\frac{1}{27}(h^2-4g^3) \notin R[g,h]$, so $R[L]^{G_8}$ is not a polynomial ring. However, since we know that $f$ and $g$ are algebraically independent modulo every prime, the same arguments as above show that the same holds for $g$ and $h$ for all primes not dividing $2$ and $3$ and by an explicit calculation we see that they are also algebraically independent modulo primes dividing $2$ and hence Proposition \ref{AlgIndResidue} shows that $(R[\frac{1}{3}])^{G_8}$ is a polynomial ring over $R[\frac{1}{3}]$.

For $G_{10}$ we find $K[L]^{G_{10}}=K[f^4,h]=K[g^3,h]$. However using $f^4=\frac{1}{27}(h^2-4g^3)$ we find that $t \coloneqq g^3+7f^4=\frac{1}{27}(-g^3+7h^2)=\frac{1}{4}(h^2-f^4)$ is in $R[L]^{G_{10}}$ but neither in $R[f^4,h]$ nor in $R[g^3,h]$. Moreover, one can check with similar arguments as above that $t$ and $h$ are algebraically independent modulo every prime in $R$ and therefore $R[L]^{G_{10}}=R[h,t]$.

\subsection{The groups number 16 to 22}

The approach for these groups is similar to the previous section. Let $\zeta \coloneqq e^{\frac{i \pi}{30}}$ be a primitive $60$-th root of unity. Then all the groups can be defined over the field $K=\mathbb{Q}(\zeta)$ with ring of integers $R=\mathbb{Z}[\zeta]$. We write $\eta=\zeta^{12}$ and $\omega=\zeta^{20}$. For the definition of the groups we use the following two matrices:
\[ S=\frac{1}{\sqrt{5}}\begin{pmatrix} \eta^4-\eta & \eta^2-\eta^3 \\ \eta^2-\eta^3 & \eta-\eta^4 \end{pmatrix}, T=\frac{1}{\sqrt{5}} \begin{pmatrix} \eta^2-\eta^4 & \eta^4-1 \\ 1-\eta & \eta^3-\eta \end{pmatrix}. \]
With these we can define the groups as follows, see \cite[Section 4]{st}:
\begin{align*}
G_{16} &=\langle -\eta^3 S, T \rangle \\
G_{17} &=\langle i S, i \eta^3 T \rangle \\
G_{18} &=\langle - \omega \eta^3 S, \omega^2  T \rangle \\
G_{19} &=\langle i \omega S, i \eta^3 T \rangle \\
G_{20} &=\langle S, \omega^2 T \rangle \\
G_{21} &=\langle i S, \omega^2 T \rangle \\
G_{22} &=\langle i S, T \rangle
\end{align*}
Again it is proved in \cite{feit2} that each of these groups has only one integral reflection representation defined over $R$ which we denote by $L$. It can be obtained by conjugating the matrices given above with
\[ U=\begin{pmatrix} \eta^3+\eta^2+2 \eta +1 & 0 \\ \eta^3+\eta^2 & 1 \end{pmatrix}. \]

We first look at the ring of invariants $R[L]^{G_{22}}$. Using \texttt{Singular} we find two possible generators in this ring: one invariant $f$ of degree $12$ with leading term $px^{11}y$ where $p=3 \eta^3-\eta^2+2\eta+1 \in R$ and one invariant $g$ of degree $20$ with leading term $x^{20}$. The element $p$ satisfies $p^4=5\alpha$ where $\alpha=55\eta^3+55\eta^2+89$ is a unit in $R$. From \cref{KemperAlgInd} we obtain that $K[L]^{G_{22}}=K[f,g]$. An explicit computation shows that $f$ and $g$ are algebraically independent modulo $p$, so by Propositions \ref{AlgIndLM} and \ref{AlgIndResidue} we find $R[L]^{G_{22}}=R[f,g]$. In the same way we also get $R[L]^{G_{17}}=R[f^5,g]$, $R[L]^{G_{19}}=R[f^5,g^3]$, $R[L]^{G_{21}}=R[f,g^3]$.

Next we look at $G_{20}$. Here using \texttt{Singular} we find $f$ and a new invariant $h$ of degree $30$ as generators for the invariant ring over $K$. The leading term of $h$ is $5px^{30}$ and as before we obtain $R([\frac{1}{p}])[L]^{G_{20}}=(R[\frac{1}{p}])[f,h]$. However, $f^5-h^2$ is divisible by $p$, so they cannot be generators over $R$. Since there are no other invariants of the same degrees as $f$ and $h$, $R[L]^{G_{20}}$ cannot be a polynomial ring.

For $G_{16}$ we have $K[L]^{G_{16}}=K[g,h]$ and these are the only possible generators. An explicit calculation shows that $k \coloneqq \frac{1}{1728}(h^2-25p^2g^3)$ has coefficients in $R$, so every subring of $K$ over which the ring of invariants is a polynomial ring must contain $R[\frac{1}{1728}]=R[\frac{1}{6}]$. The leading term of $k$ is $5 p \beta x^{55} y^5$ where $\beta=5 \eta^3+5 \eta^2+8 \in R^\times$. Therefore by  Proposition  \ref{AlgIndLM} we have $R[\frac{1}{5}][g,k]=R[\frac{1}{5}][x,y] \cap K[g,k]$. Now let $F \in R[\frac{1}{5}][L]^{G_{16}}=K[g,h] \cap R[\frac{1}{5}][x,y]$. Then we have $F=F_1+hF_2$ where $F_1, F_2 \in K[g,h^2] \cap R[\frac{1}{5}][x,y]=K[g,k] \cap R[\frac{1}{5}][x,y]=R[\frac{1}{5}][g,k] \subseteq R[\frac{1}{30}][g,h^2]$. This shows that $R[\frac{1}{30}][L]^{G_{16}}=R[\frac{1}{30}][g,h]$. Finally, an explicit calculation shows that $g$ and $h$ are algebraically independent modulo $p$, so $R[\frac{1}{6}][L]^{G_{16}}=R[\frac{1}{6}][g,h]$ and this is the best result we can get.

Finally, for $G_{18}$ we have $K[L]^{G_{18}}=K[h,g^3]=K[h,k]$ where $k$ is defined as in the discussion of $G_{16}$ above. Using Proposition \ref{AlgIndLM} we get that the invariant ring over $R[\frac{1}{p}]$ is $R[\frac{1}{p}][L]^{G_{18}}=R[\frac{1}{p}][h,k]$. The polynomials $k$ and $h$ are not algebraically independent modulo $p$, but with $l \coloneqq \frac{(3p^6-2)h^2+k}{p^{10}}$ we have $R[\frac{1}{p}][h,k]=R[\frac{1}{p}][h,l]$ and an explicit calculation shows that $h$ and $l$ are algebraically independent modulo $p$, so $R[L]^{G_{18}}=R[h,l]$ is a polynomial ring.

\subsection{The groups number 4 to 7}

For these groups we have several integral reflection representations which are all equivalent over the complex numbers. We first define the groups as matrix groups for one of these representations as follows, see \cite[Section 3]{feit2}. All of them can be defined over the field $K=\mathbb{Q}(\omega,i)$ with ring of integers $R=\mathbb{Z}[\omega,i]$; here $\omega=e^{\frac{2 \pi i}{3}}$ denotes a primitive third root of unity.
We consider the matrices
\[ S=\begin{pmatrix} i & 0 \\ 0 & -i \end{pmatrix}, T=\begin{pmatrix} -\omega^2 & 1 \\ 0 & -\omega \end{pmatrix}. \]
The four groups are then defined as
\begin{align*}
G_4 &= \langle -S, -\omega T \rangle, \\
G_5 &= \langle -\omega S, -\omega T \rangle, \\
G_6 &= \langle iS, -\omega T \rangle, \\
G_7 &= \langle i \omega S, -\omega T \rangle
\end{align*}

We denote this representation of these groups as $L_1$. In order to determine their invariant rings, we need the following two polynomials where we write $p \coloneqq 2 \omega +1$:
\begin{align*}
f &\coloneqq px^4+4x^3y-2px^2y^2-4xy^3+py^4, \\
g &\coloneqq x^6-2px^5y-5x^4y^2-5x^2y^4+2pxy^5+y^6.
\end{align*}

Using \texttt{Singular} we obtain $K[L_1]^{G_4}=K[f,g]$ and therefore from the  definition of the groups and \cref{KemperAlgInd} we get $K[L_1]^{G_5}=K[f^3,g]$, $K[L_1]^{G_6}=K[f,g^2]$, $K[L_1]^{G_7}=K[f^3,g^2]$. The polynomial $g \coloneqq \frac{1}{64}(f^3-p^3g^2)$ has coefficients in $R$ and leading term $x^9y^3$. We get $K[L_1]^{G_5}=K[g,h]$ and $K[L_1]^{G_7}=K[g^2,h]$, so by Proposition \ref{AlgIndLM} we obtain $R[L_1]^{G_5}=R[g,h]$ and $R[L_1]^{G_7}=R[g^2,h]$, so both are polynomial rings. Moreover, $K[L_1]^{G_6}=K[f,h]$ and an explicit calculation shows that $f$ and $h$ are algebraically independent modulo $p$, so $R[L_1]^{G_6}=R[f,h]$ by Proposition \ref{AlgIndResidue}.

If $R[L_1]^{G_4}$ is a polynomial ring, then it must be $R[f,g]$, but $h \in R[L_1]^{G_4}$ and $h \notin R[f,g]$, so this is not the case. We show now that we have $R[\frac{1}{2}][L_1]^{G_4}=R[\frac{1}{2}][f,g]$. Similarly as for $G_{16}$ in the previous section we see that every $F \in R[\frac{1}{2}][L_1]^{G_4}$ can be written as $F=F_1+gF_2$ with $F_1, F_2 \in K[f,g^2] \cap R[\frac{1}{2}][L_1]=R[\frac{1}{2}][L_1]^{G_6}=R[\frac{1}{2}][f,h]$. Since $h \in R[\frac{1}{2}][f,g]$, we therefore get $F \in R[\frac{1}{2}][f,g]$, so we get indeed that $R[\frac{1}{2}][L_1]^{G_4}=R[\frac{1}{2}][f,g]$.

The second integral reflection representation $L_2$ of these groups is obtained by conjugating the matrices given above with the matrix 
\[ U=\begin{pmatrix} 2 & 1 \\ 0 & -1 \end{pmatrix}. \]
Then the rings of invariants over $K$ are obtained by replacing $f$ and $g$ in the invariant rings for $L_1$ above by the following two polynomials:
\begin{align*}
f' &\coloneqq \frac{1}{16}(U \cdot f)=px^4+4 \omega x^3y +2(\omega-1)x^2y^2-xy^3, \\
g' &\coloneqq \frac{1}{8}(U \cdot g)=
8x^6+16(\omega+2)x^5y+40(\omega+1)x^4y^2-20px^3y^3+20\omega x^2y^4+4(\omega-1)x^5y-y^6.
\end{align*}

Using Proposition \ref{AlgIndLM} with the lexicographic ordering with $y>x$ we get directly, that all the invariant rings $R[L_2]^{G_i}$ are polynomial rings; more precisely, we have $R[L_2]^{G_4}=R[f',g']$, $R[L_2]^{G_5}=R[f'^3,g']$, $R[L_2]^{G_6}=R[f,g'^2]$, and $R[L_2]^{G_7}=R[f'^3,g'^2]$.

Finally, for $G_6$ and $G_7$ there is a third integral representation $L_3$ that we need to consider. This is obtained by conjugating the matrices defining $L_1$ with
\[ U' \coloneqq \begin{pmatrix} 1+i & 1 \\ 0 & -1 \end{pmatrix}. \]
Since $\det(U') \in R[\frac{1}{2}]^\times$, the representations $L_1$ and $L_3$ are equivalent over $R[\frac{1}{2}]$. Since $R[L_1]^{G_6}$ and $R[L_1]^{G_7}$ are polynomial rings, the same therefore holds for $R[\frac{1}{2}][L_3]^{G_6}$ and $R[\frac{1}{2}][L_3]^{G_7}$. Moreover, $1+i$ is the only prime element in $R$ which divides $2$ and an explicit calculation shows that $f''=\frac{1}{4}(U' \cdot f)$ and $g''=\frac{1}{8}(U' \cdot g)$ are algebraically independent modulo $1+i$, so by Proposition \ref{AlgIndResidue} we get that $R[L_3]^{G_6}=R[f'',g''^2]$ and $R[L_3]^{G_7}=R[f''^3,g''^2]$.

\subsection{Summary}

The results of this section can be summarized in the following theorem:

\begin{theorem}
Let $n \in \{ 4, \ldots, 22 \}$, let $K \subseteq \mathbb{C}$ be the smallest number field over which $G_n$ can be defined, let $R$ be the ring of integers of $K$, and let $L$ be an integral reflection representation of $G_n$. Then $R[L]^{G_n}$ is isomorphic to a polynomial ring except if $n \in \{ 8,16,20 \}$ or $n=4$ and $L$ is the first of the two integral reflection representations of $G_4$.
\end{theorem}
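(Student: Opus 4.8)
The statement simply collects the group-by-group computations of \cref{SectionDegree2}, so the plan is to verify it along the three families of the Shephard--Todd numbering: the groups $8$--$15$, the groups $16$--$22$, and the groups $4$--$7$. For a fixed group $G_n$ and integral reflection representation $L$ the method is uniform and rests on the three tools already set up. Since each smallest field has class number $1$ and acquires no new integral reflection representations over the larger fields one passes to, \cref{PolyFieldExt} allows me to carry out every computation over a conveniently large ring of integers $R'$ (such as $\mathbb{Z}[\varepsilon,\omega]$ for the groups $8$--$15$, $\mathbb{Z}[\zeta]$ for the groups $16$--$22$, and $\mathbb{Z}[\omega,i]$ for the groups $4$--$7$) and then to descend any polynomiality statement to the ring of integers $R$ of the genuinely smallest field $K$. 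I determine $K[L]^{G_n}$ by producing homogeneous invariants whose degrees multiply to $|G_n|$ and invoking \cref{KemperAlgInd}, and I decide integrality of the candidate generators with \cref{AlgIndLM} (leading coefficients units, leading monomials algebraically independent) together with \cref{AlgIndResidue} (algebraic independence of the reductions modulo each maximal ideal).

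For the positive cases I would take the \texttt{Singular}-produced generators of the correct degrees and, whenever possible, apply \cref{AlgIndLM} with the lexicographic ordering to identify $R'[f_1,f_2]$ with $K[f_1,f_2]\cap R'[x,y]$; this settles most groups at once. When a leading coefficient is a nonunit at some prime $p$ (as for $G_{10}$, $G_{18}$, and $G_6$), I would instead use \cref{AlgIndResidue} directly, if necessary after passing to $R'[\tfrac{1}{p}]$ and after trading a generator for a better-behaved $R'$-combination such as the elements $t$ or $l$ introduced above, and then verify that the reductions modulo $p$ are algebraically independent. A final application of \cref{PolyFieldExt} hands each such conclusion down to $R$, yielding a polynomial ring for every group and representation outside the announced exceptions.

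For the three negative groups $n\in\{8,16,20\}$ and the first representation of $G_4$ the reasoning reverses. In each of these cases the two fundamental degrees $d_1<d_2$ satisfy $d_2\nmid d_1$, so the invariant space is one-dimensional over $K$ in both degrees; hence any pair of polynomial generators over $R$ is forced, up to unit scalars, to be an explicit pair such as $g,h$ for $G_8$. I then exhibit a genuine invariant lying outside the subring these generate: the element $f^4=\tfrac{1}{27}(h^2-4g^3)$ for $G_8$, the element $k=\tfrac{1}{1728}(h^2-25p^2g^3)$ for $G_{16}$, the relation forced by $p\mid(f^5-h^2)$ for $G_{20}$, and $h=\tfrac{1}{64}(f^3-p^3g^2)$ for $G_4$. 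The crucial observation for the theorem, which is stated over the smallest field rather than over $R'$, is that these witnessing invariants already have coefficients in the smallest ring of integers $R$ while their denominators are powers of $2$, $3$, or the prime above $5$, each of which remains a nonunit in $R$; therefore polynomiality already fails over $R$.

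I expect the main obstacle to be the behaviour at the small primes $2$, $3$, and $5$, since this is precisely where the clean leading-term argument of \cref{AlgIndLM} with a single monomial ordering breaks down and where the positive/negative dichotomy is decided. At these primes the decisive input is the explicit \cref{AlgIndResidue} verification that the reductions of the chosen generators are algebraically independent modulo the bad prime, frequently only after the ad hoc change of generators mentioned above; finding the right combination and carrying out the reduction computation is the delicate step for which no conceptual shortcut is apparent. A secondary point requiring care is the bookkeeping of the several integral representations of the groups $4$--$7$ and checking that the negative conclusions genuinely descend from $R'$ to the smallest ring $R$ rather than only holding over the larger ring used for the computation.
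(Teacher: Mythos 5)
Your proposal follows essentially the same route as the paper: \cref{PolyFieldExt} to reduce to a convenient ring of integers, \cref{KemperAlgInd} to identify generators over $K$, \cref{AlgIndLM} and \cref{AlgIndResidue} (with ad hoc changes of generators at the bad primes) for the positive cases, and an explicit invariant such as $\frac{1}{27}(h^2-4g^3)$ lying outside the forced candidate ring for the negative cases $n\in\{8,16,20\}$ and the first representation of $G_4$. Your divisibility condition should read $d_1\nmid d_2$ rather than $d_2\nmid d_1$, but the intended argument (uniqueness of the candidate generators up to units in the relevant degrees) is exactly the paper's, and your care about descending the negative conclusions to the smallest ring of integers is well placed.
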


\bibliographystyle{plainurl}

\bibliography{AInvPseudoIrred}

\end{document}